\def\sign{\mathop{\rm Sign}}
\def\ker{\mathop{\rm Ker}}
\makeatletter\@addtoreset {equation}{section}\makeatother
\newtheorem{theorem}{Theorem}
\newtheorem{lemma}{Lemma}
\newtheorem{corollary}{Corollary}
\def\Ddots{\mathinner{\mkern1mu\raise\p@
\vbox{\kern7\p@\hbox{.}}\mkern2mu
\raise4\p@\hbox{.}\mkern2mu\raise7\p@\hbox{.}\mkern1mu}}
\newenvironment{proof}{
    \noindent {\it Proof.}}{\hfill$\Box$
}
\begin{document}

\title{\bf On Factorization of a Perturbation of a J-selfadjoint Operator Arising in Fluid
Dynamics.}

\author{Marina Chugunova \\ {\small Department of Mathematics, University of
Toronto, Canada} \\ Vladimir Strauss \\ {\small Department of Pure
\& Applied Mathematics, Sim\'on Bol\'{\i}var University, Venezuela}
}

\date{\today}
\maketitle

Abstract: We prove that some perturbation of a J-selfadjoint second
order differential operator admits factorization and use this new
representation of the operator to prove compactness of its resolvent
and to find its domain.

AMS classification codes: primary 47B10, 34L40; secondary 35M10.

Keywords: factorization, Krein space, J-self-adjoint, fluid
mechanics, forward-backward heat equation

\section{Introduction}
Depending on the parameters involved, the dynamics of the film of
viscous fluid can be described by different asymptotic equations.
Under the assumption that the film is thin enough for viscous
entrainment to compete with gravity, the time evolution model of a
thin film of liquid on the inner surface of a cylinder rotating in a
gravitational field was based on the lubrication approximation and
examined by Benilov, O'Brien, and Sazonov \cite{Benilov1, Benilov3}.
The related Cauchy problem has the following form:
\begin{equation}
\label{heat PDE}
 y_t + l[y] = 0, \quad y(0,x) = y_0, \quad y(-\pi,t) = y(\pi,t), \quad x\in [-\pi,\pi], \quad t > 0
\end{equation}
where
\begin{equation}
\label{operation}
 l[y] = \frac{d\,}{d \, x} \left((1 - a \, \cos x)y(x) + b \,
\sin x \cdot \frac{d\, y(x)}{d \, x} \right), \quad a, b > 0
\end{equation}

Eigenmode solutions are very important in stability analysis,
because even a single growing mode can destabilize an otherwise
stable system. In case when all modes are bounded in time and the
corresponding eigenfunctions form a complete set, the system
normally regarded as a stable one. Because, an arbitrary initial
condition can be represented as a series of these eigenmodes; and
since all of them are stable, so expected to be the solution to the
initial- value problem.

There are however counterexamples to the arguments above when each
term of the series is bounded but the series as a whole diverges and
the solution develops a singularity in a finite time. This effect
was observed by Benilov, O'Brien, and Sazonov \cite{Benilov1} for
the problem \ref{heat PDE} when parameter in (\ref{operation}) $\,\,
a = 0$. For this case when the effect of gravitational drainage was
neglected because of infinitesimally thin film they studied
stability of the problem asymptotically and numerically. It was
shown that even for infinitely smooth initial values numerical
solutions blow up after a small number of iterations.

The spectrum of the linear operator $L$ that is defined by the
operation $l[.]$ and periodic boundary conditions $y(-\pi) = y(\pi)$
for the special case when the parameter $a=0$ was studied rigorously
in \cite{Davies, ChugPel, Weir}. Using different approaches they
justified that if the parameter $b$ restricted to the interval
$[0,2]$ then the operator $L$ is well defined in the sense that it
admits closure in $L^2 (-\pi, \pi)$ with non-empty resolvent set
without breaking the boundary conditions $y(-\pi) = y(\pi)$. The
spectrum of the operator $L$ is discrete and consists of simple pure
imaginary eigenvalues only. As a result all eigenfunctions have the
following symmetry $y_\lambda(-x) = \overline{y_\lambda(x)}$. The
more general operator with the function $sin(x)$ replaced by the
arbitrary $2\pi$-periodic functions was studied in \cite{PT-sym} and
it was proved that this operator multiplied by $i$ belongs to a wide
class of $PT$-symmetric operators which are not similar to
self-adjoint but nevertheless possesses purely real spectrum due to
some obvious and hidden symmetries.

The phenomenon of the coexistence of the neutrally stable modes with
explosive instability of the numerical solutions \cite{Benilov1}
(which correspond to drops of fluid forming on the ceiling of the
cylinder where the effect of the gravity is the strongest) was
studied analytically and explained in terms of the absence of the
Riesz basis property of the set of eigenfunctions in
\cite{ChKP08Preprint}. The question of a conditional basis property
of the set of eigenfunction is still open.

For the case when $a \neq 0$, as it was discussed in
\cite{Benilov3}, the spectral properties of the operator $L$ are not
expected to differ a lot from the case $a = 0$.

The goal of this paper is to find the factorization of the operator
$L$ (under some restrictions on parameters $a$ and $b$) that would
be in some sense similar to one we constructed for the special case
$a = 0$ in \cite{ChugStr} (in this case the operator $L$ is
$J$-self-adjoint with the operator $J$ defined as a shift $J(f(x)) =
f(\pi - x)$)  and to examine some properties of the operator $L$
using this factorization. The main difficulty to overcome here is an
existing coupling between two subspaces spanned by positive and
negative Fourier exponents which are not invariant subspaces of the
operator $L$ if $a \neq 0$. We also prove that the non-self-adjoint
differential operator $L$ has compact resolvent and as result
spectrum of $L$ is discrete with the only accumulating point at
infinity.

\section{Factorization of the non-self-adjoint operator $L$.}

We denote by $\mathfrak{D}(T)$ and $\mathfrak{R}(T)$ the domain and
the range of linear operator $T$ respectively. The notation
$\mathcal{L}^2$ is used for the standard Lebesgue space of scalar
functions defined on the interval $(-\pi , \pi)$. From here on $L$
is the indefinite convection-diffusion operator $L:$
$$ \mathcal{L}^2\mapsto \mathcal{L}^2, \quad (Ly)(x) =
\frac{d\,}{d \, x} \left((1 - a \, \cos x)y(x) + b \, \sin x \cdot
\frac{d\, y(x)}{d \, x} \right)
$$ with the domain of all
absolutely continuous $2\pi$-periodic functions $y(x)$ such that
$(Ly)(x)\in \mathcal{L}^2$.

In addition, we define the operator $S:$
$$\mathcal{L}^2\mapsto \mathcal{L}^2, \quad (Sy)(x)=y^{\prime}(x),$$
where $y^{\prime}(x)\in \mathcal{L}^2$, $y(-\pi )=y(\pi )$, and the
operator $M:$ $\mathcal{L}^2\mapsto \mathcal{L}^2$, $$ (My)(x)\colon
=(1 - a \, \cos x)y(x) + b \, \sin x \cdot \frac{d\, y(x)}{d \, x}
$$ with the domain of all functions $y(x)\in \mathcal{L}^2$
absolutely continuous on $(-\pi,\,0)\cup (0,\, \pi )$ and such that
$(My)(x)\in \mathcal{L}^2$. Note, that, for example,
$y(x)=x^{-1/3}\in \mathfrak{D}(M)$. The operator $M$ can also be
represented by the following expression
$$(My)(x)=\big(1 - (a+b) \, \cos x)y(x) +
b \, (\sin x \cdot  y(x)\big)^{\prime}. $$

\begin{theorem}\label{main}If the parameters $a$ and $b$ satisfy the inequality
 $2 a + b < 2$, then $L$ is a closed operator with a closed range and $L=SM$.\end{theorem}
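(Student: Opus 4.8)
The plan is to take the formal identity $Ly=(My)'=S(My)$ for granted and to concentrate on the equality of domains $\mathfrak{D}(L)=\mathfrak{D}(SM)$; the closedness and closed-range assertions will then follow from an explicit description of $\mathfrak{R}(L)$ together with a bounded right inverse. On the open intervals $(-\pi,0)$ and $(0,\pi)$ the equation $My=g$, i.e. $b\sin x\cdot y'+(1-a\cos x)y=g$, is a regular first-order linear ODE; the whole difficulty sits at the three points $x=0,\pm\pi$ where $\sin x$ vanishes. Near $x=\pm\pi$ the indicial exponent of the homogeneous equation is $(1+a)/b>0$, so there every solution of $My=g$ is automatically bounded and absolutely continuous up to the endpoint, and no parameter restriction is needed. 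Near $x=0$ the homogeneous equation has solution $y_h(x)\sim|x|^{-(1-a)/b}$, and this is exactly where the hypothesis enters: $2a+b<2$ is equivalent to $-(1-a)/b<-1/2$, which in particular means $y_h\notin\mathcal{L}^2$ near $0$. Hence any $\mathcal{L}^2$ solution of $My=g$ must coincide, on each side of $0$, with the regular particular solution, which is bounded, is absolutely continuous across $x=0$, and (when $g$ is continuous) takes the value $y(0)=g(0)/(1-a)$.

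I would package this ODE analysis as the assertion that $M\colon\mathfrak{D}(M)\to\mathcal{L}^2$ is a bijection whose inverse is the Volterra-type integral operator $(M^{-1}g)(x)=y_h(x)\int\frac{g(s)}{b\sin s\cdot y_h(s)}\,ds$ (lower limits chosen separately near $0$ and near $\pm\pi$), and that this operator is bounded on $\mathcal{L}^2$. Away from the three singular points its kernel is bounded, so everything reduces to the contribution near $x=0$, where $M^{-1}$ is comparable to the weighted Hardy operator $g\mapsto|x|^{\rho_0}\int_0^x|s|^{-\rho_0-1}g(s)\,ds$ with $\rho_0=-(1-a)/b$. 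The classical weighted Hardy inequality makes this bounded on $\mathcal{L}^2(0,\delta)$ precisely when $\rho_0<-1/2$, i.e. again precisely when $2a+b<2$; near $x=\pm\pi$ the analogous operator has positive exponent and is harmless. I expect this Hardy-type estimate at $x=0$, together with the verification that $M^{-1}g$ is genuinely absolutely continuous up to all three singular points, to be the main technical obstacle.

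Granting that $M$ is a bounded-inverse bijection, the domain equality is then short. For $\mathfrak{D}(L)\subseteq\mathfrak{D}(SM)$: if $y\in\mathfrak{D}(L)$ then $My\in L^1_{\mathrm{loc}}$ and $(My)'=Ly\in\mathcal{L}^2$, so $My$ is absolutely continuous on $[-\pi,\pi]$, hence bounded, hence in $\mathcal{L}^2$, so $y\in\mathfrak{D}(M)$; continuity of $My$ and of $(1-a\cos x)y$ forces $b\sin x\cdot y'\to0$ as $x\to\pm\pi$, so $(My)(\pm\pi)=(1+a)y(\pm\pi)$, and periodicity of $y$ then yields $(My)(-\pi)=(My)(\pi)$, i.e. $My\in\mathfrak{D}(S)$. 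The reverse inclusion is precisely the ODE analysis above: if $y\in\mathfrak{D}(M)$ with $My\in\mathfrak{D}(S)$, the non-square-integrability of $y_h$ at $0$ forces $y$ to be absolutely continuous across $0$ and up to $\pm\pi$, the identities $y(\pm\pi)=(My)(\pm\pi)/(1+a)$ turn $(My)(-\pi)=(My)(\pi)$ into $y(-\pi)=y(\pi)$, and $Ly=(My)'\in\mathcal{L}^2$, so $y\in\mathfrak{D}(L)$. This gives $L=SM$.

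For the range and closedness, I would integrate $Ly=(My)'$ over $(-\pi,\pi)$ and use periodicity of $My$ to get $\mathfrak{R}(L)\subseteq\{f\in\mathcal{L}^2:\int_{-\pi}^{\pi}f\,dx=0\}$; conversely, for such $f$ the function $F(x)=\int_{-\pi}^{x}f$ is periodic and absolutely continuous with $F'=f$, and $y:=M^{-1}F$ then lies in $\mathfrak{D}(L)$ with $Ly=f$, so $\mathfrak{R}(L)$ is exactly this hyperplane, which is closed. The same computation shows $\ker L=\mathbb{C}\cdot M^{-1}(\mathbf{1})$ is one-dimensional. Finally $Rf:=M^{-1}\!\big(\int_{-\pi}^{\cdot}f\big)$ is a bounded right inverse of $L$ on the closed subspace $\mathfrak{R}(L)$, and the standard argument---if $y_n\to y$ and $Ly_n\to f$, then $f\in\mathfrak{R}(L)$, $RLy_n\to Rf$, and $y_n-RLy_n\in\ker L$ converges, so $y-Rf\in\ker L$ and thus $y\in\mathfrak{D}(L)$ with $Ly=f$---shows that $L$ is closed.
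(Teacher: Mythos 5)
Your outline is correct and reaches all three conclusions, and its backbone --- solving $My=u$ explicitly on $(-\pi,0)$ and $(0,\pi)$, using $2a+b<2$ to show that the homogeneous solution $|x|^{-(1-a)/b}$ fails to be square integrable near $0$ so that both constants of integration must vanish, and then matching the boundary values $y(0)=u(0)/(1-a)$ and $y(\pm\pi)=u(\pm\pi)/(1+a)$ to show that $y$ is periodic if and only if $u=My$ is --- is exactly the paper's. You diverge in two auxiliary steps. For the bounded invertibility of $M$ the paper writes $M=D+iC$ with $C$ self-adjoint and $D$ positive, bounded and boundedly invertible, and reduces the question to the regularity of non-real points of a self-adjoint operator; you instead estimate the explicit Volterra kernel of $M^{-1}$ by the weighted Hardy inequality, which is more hands-on and makes the role of the threshold $\rho_0=-(1-a)/b<-1/2$ (equivalently $2a+b<2$) completely transparent, at the cost of carrying out kernel estimates near all three singular points. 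For the closedness of $L$ the paper runs a bespoke sequence argument through the decomposition $\mathcal{L}^2=\mathcal{L}_2\dot{+}\{\mu y_0\}$ and the bounded inverse of $S|_{\mathcal{L}_1}$; your ``closed range plus one-dimensional kernel plus bounded right inverse $R=M^{-1}\circ\int_{-\pi}^{\cdot}$'' argument is the cleaner, more standard route, and it also yields closedness of $M$ for free, whereas the paper proves that separately by computing $M^*$ and $M^{**}$. One small imprecision to fix when writing this up: near $\pm\pi$ a solution of $My=g$ with general $g\in\mathcal{L}^2$ is only $O\big((\pi\mp x)^{-1/2}\big)$ rather than bounded; boundedness and the limit value $g(\pm\pi)/(1+a)$ require $g$ continuous, which holds everywhere you actually invoke them (there $g=My$ is absolutely continuous), and the $\mathcal{L}^2$-boundedness of $M^{-1}$ near $\pm\pi$ still follows from the Hardy estimate with the positive exponent $(1+a)/b$.
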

\begin{proof}
 Let us consider the operator $A:$ $$\mathcal{L}^2\mapsto
\mathcal{L}^2, \quad (Ay)(x)=(\sin(x)y(x))^{\prime}$$ with
$\mathfrak{D}(A)=\{y(x)\ |\ y(x), (Ay)(x)\in \mathcal{L}^2\}$. Then
a function $y(x)$ can be written as
\begin{equation}\label{1} y(x)=\frac{1}{\sin(x)}\cdot \big(
c+\int_0^x \theta(t)dt\big) ,\quad \theta(t)\in
\mathcal{L}^2.\end{equation} If $x>0$ then
$$ |\int_0^x
\theta(t)dt|\leq \frac{1}{\sin(x)}\cdot \alpha(x)\cdot x^{1/2},$$
where $\alpha(x)=\big(\int_0^x |\theta(x)|^2dx\big)^{1/2}$. Since
the two summands in (\ref{1}) have different orders of growth as
$x\to 0$ this implies that if $y(x)\in \mathcal{L}^2$ then $c=0$ and
\begin{equation}\label{2} y(x)=\frac{1}{\sin(x)}\cdot
\int_0^x \theta(t)dt .\end{equation} Moreover,
\begin{equation}\label{3} |y(x)|\leq\frac{x^{1/2}}{\sin(x)}\cdot
\alpha(x) .\end{equation} A small modification of the same reasoning
leads to the following estimation for every $x\in (-\pi ,\pi)$
\begin{equation}\label{4} |y(x)|\leq\frac{|x|^{1/2}}{|\sin(x)|}\cdot
\alpha(x) .\end{equation} with $\alpha(x)=\big|\int_0^x
|\theta(x)|^2dx\big|^{1/2}$.

Alternatively the same function $y(x)$ can be written as
\begin{equation}\label{5} y(x)=\frac{1}{\sin(x)}\cdot \big(
\tilde{c}-\int_x^{\pi} \theta(t)dt\big) ,\quad \theta(t)\in
\mathcal{L}^2.\end{equation} with the same  $\theta(x)$ as in
(\ref{1}). Representation (\ref{5}) yields the following relations
\begin{equation}\label{6} y(x)=\frac{-1}{\sin(x)}\cdot
\int_x^{\pi} \theta(t)dt \end{equation} and
\begin{equation}\label{7} |y(x)|\leq\frac{(\pi -x)^{1/2}}{\sin(x)}\cdot
\beta(x) \end{equation} with $\beta(x)=\big|\int_x^{\pi}
|\theta(x)|^2dx\big|^{1/2}$.

It follows from (\ref{2}) and (\ref{6}) that
\begin{equation}\label{8} \int_{0}^{\pi}\theta(t)dt=0.\end{equation}

Starting from the point $-\pi$ one can also obtain that
\begin{equation}\label{9} y(x)=\frac{1}{\sin(x)}\cdot \int_{-\pi}^x
\theta(t)dt,\end{equation}
\begin{equation}\label{10} |y(x)|\leq\frac{(-\pi +x)^{1/2}}{|\sin(x)|}\cdot
\gamma(x) \end{equation} with $\gamma(x)=\big(\int_{-\pi}^x
|\theta(x)|^2dx\big|^{1/2}$ and
\begin{equation}\label{11} \int_{-\pi}^{0}\theta(t)dt=0.\end{equation}
\par
Now we are ready to calculate $M^{*}$. Using smooth functions $y(x)$
such that $y(x)\equiv 0$ in some neighborhoods of the points $-\pi$,
$0$ and $\pi$ (neighborhoods depend of $y(x)$) it easy to show that
$$(M^*z)(x)=\big(1 - a \, \cos x)z(x) -
b \, (\sin x \cdot  z(x)\big)^{\prime} $$ for every
$z(x)\in\mathfrak{D} (M^*)$. Since the condition
$z(x)\in\mathfrak{D} (M^*)$ yields \begin{equation}\label{12}
z(x)\in \mathcal{L}^2\ \mbox{ and }\ (M^*z)(x)\in \mathcal{L}^2\,
,\end{equation} for $z(x)$ are fulfilled the conditions of the type
(\ref{4}), (\ref{7}) and (\ref{10}). Taking into account the latter
one can check that
$$(My,z)=(y,M^*z)$$ for every $y(x)\in\mathfrak{D} (M)$ and $z(x)$
under Conditions (\ref{12}). Thus,
$\mathfrak{D}(M)=\mathfrak{D}(M^*)$. The same reasoning shows that
$M^{**}=M$, so $M$ is closed.
\par
Let
$$
(1 - a \, \cos x)y(x) + b \, \sin x \cdot \frac{d\, y(x)}{d \, x}
=u(x), \quad y(x),u(x)\in\mathcal{L}^2.
$$ Our aim is to express $y(x)$ via $u(x)$. Let $x\in(-\pi,\,\pi),\, x\not=0$. Then $y(x)=$
$$\big(c(1+\sign x)/2+c_1(1-\sign x)/2\big)\cdot (\sin |x|)^{a/b}\cdot (\cot |x|/2)^{1/b} \,
+$$ $$ \frac{1}{b}(\sin |x|)^{a/b}\cdot (\cot |x|/2)^{1/b}\int_0^x
u(t)(\sin |t|)^{-\frac{a}{b}}\cdot (\sin t)^{-1}\cdot  (\tan
|t|/2)^{1/b}dt,$$ where $c$ and $c_1$ are constants. The estimations
that follow closely depend of a relation between $a$ and $b$. We
assume that $2a+b<2$. Then for $x>0$
$$\int_0^x
u(t)(\sin t)^{-(\frac{a}{b}+1)}\cdot (\tan t/2)^{1/b}dt=\int_0^x
v(t)(t)^{-(\frac{a}{b}+1)}\cdot (t/)^{1/b}dt,$$ where
$v(t)=u(t)(\sin t)^{-(\frac{a}{b}+1)}\cdot (\tan t/2)^{1/b} (
t)^{(\frac{a}{b}+1)}\cdot ( t)^{-1/b}$, so,
$$ |\int_0^x
u(t)(\sin t)^{-(\frac{a}{b}+1)}\cdot (\tan t/2)^{1/b}dt|\leq $$
$$\sqrt{\frac{b}{2-2a-b}}\cdot x^{\frac{2-2a-b}{2b}}\cdot \big(\int_0^x |v(t)|^2dt\big)^{1/2}.$$
Thus, the first summand (if $c\not=0$) for $y(x)$ has the order
$x^{\frac{a-1}{b}}$ and the second one has the order
$x^{-1/2}\alpha(x)$ with $\lim\limits_{x\to 0}\alpha(x)=0$. Since
$u(x)\in\mathcal{L}^2$, $c=0$. The same reasoning shows that
$c_1=0$. Thus, \begin{equation}\label{13} y(x)=\frac{1}{b}(\sin
|x|)^{a/b}\cdot (\cot |x|/2)^{1/b}\int_0^x u(t)(\sin
|t|)^{-\frac{a}{b}}\cdot (\sin t)^{-1}\cdot(\tan
|t|/2)^{1/b}dt.\end{equation} In particular, for $u(x)\equiv 1$ we
have
$$y_0(x)\colon =\frac{1}{b}(\sin
|x|)^{a/b}\cdot (\cot |x|/2)^{1/b}\int_0^x (\sin
|t|)^{-\frac{a}{b}}\cdot (\sin t)^{-1}\cdot(\tan |t|/2)^{1/b}dt.$$
Some elementary estimations show that there are finite limits
$\lim\limits_{x\to 0}y_0(x)$, $\lim\limits_{x\to -\pi}y_0(x)$ and
$\lim\limits_{x\to \pi}y_0(x)$ with $\lim\limits_{x\to
-\pi}y_0(x)=\lim\limits_{x\to \pi}y_0(x)$. Let us show these
relations. First, for $t>0$ we define $w(t)\colon =\big(\frac{\sin
t}{t}\big)^{-\frac{a}{b}-1}\cdot\big(\frac{\tan
(t/2)}{t}\big)^{1/b}$. Then $\lim\limits_{t\to +0}w(t)=(1/2)^{1/b}$.
Moreover, for $x>0$ $$y_0(x)\colon =\frac{1}{b}(\sin x)^{a/b}\cdot
(\cot x/2)^{1/b}\int_0^x w(t)t^{-\frac{a}{b}-1+1/b}dt,$$ so
$$y_0(x) =\frac{1}{b}(\sin x)^{a/b}\cdot
(\cot x/2)^{1/b}\frac{b}{1-a} w(\xi_x)(x)^{-\frac{a}{b}+1/b},$$
where $\xi_x\in (0,x)$. The latter yields $y(0)\colon
=\lim\limits_{x\to +0}y(x)=\frac{1}{1-a}$. Second, for $t<\pi$ we
define $w_+(t)\colon =\big(\frac{\sin t}{\pi
-t}\big)^{-\frac{a}{b}-1}\cdot\big(\frac{(\pi -t)}{2}\cdot \tan
(t/2)\big)^{1/b}$. Then $\lim\limits_{t\to +\pi-0}w_+(t)=1$ and for
$z(x)\colon =(1+a)\cdot y_0(x)\cdot
\big(\sin(x/2)\big)^{\frac{2}{b}}$ we have
$$z(x) =\frac{1+a}{b}(\sin x)^{\frac{a+1}{b}}\cdot
\int_0^x w_+(t)(\pi -t)^{-\frac{a}{b}-1-1/b}dt.$$ Let us fix
$\epsilon >0$ Then there is $\delta >0$ such that $1-\epsilon <
w_+(x)<1+\epsilon$ for every $x\in (\pi -\delta, \pi)$. Next, for
the same $x$
$$ z(x)= \frac{1+a}{b}(\sin x)^{\frac{a+1}{b}}\cdot \big(
\int_0^{\pi -\delta} w_+(t)(\pi -t)^{-\frac{a}{b}-1-1/b}dt+$$ $$
\int_{\pi-\delta}^x w_+(t)(\pi -t)^{-\frac{a}{b}-1-1/b}dt\big).$$
Since
$$\int_{\pi-\delta}^x w_+(t)(\pi -t)^{-\frac{a}{b}-1-1/b}dt\big)=\frac{b}{1+a} w(\nu_{x,\delta})\big( (\pi-x)^{-\frac{1+a}{b}}-\delta^{-\frac{1+a}{b}} \big)$$
with $\nu_{x,\delta}\in (\pi -\delta, \pi) $,
$$\frac{b}{1+a} (1-\epsilon)\big( (\pi-x)^{-\frac{1+a}{b}}-\delta^{-\frac{1+a}{b}} \big)\leq\int_{\pi-\delta}^x w_+(t)(\pi -t)^{-\frac{a}{b}-1-1/b}dt\big)\leq$$ $$\frac{b}{1+a} (1+\epsilon)\big( (\pi-x)^{-\frac{1+a}{b}}-\delta^{-\frac{1+a}{b}} \big).$$ Moreover, for fixed $\delta$ $$\lim\limits_{t\to +\pi-0}
\frac{1+a}{b}(\sin x)^{\frac{a+1}{b}}\cdot \int_0^{\pi -\delta}
w_+(t)(\pi -t)^{-\frac{a}{b}-1-1/b}dt=0$$ and $$\lim\limits_{t\to
+\pi-0} (\sin x)^{\frac{a+1}{b}}\cdot  w(\nu_{x,\delta})
\delta^{-\frac{1+a}{b}}=0,$$ so the equality $\lim\limits_{x\to
\pi}y_0(x)=\frac{1}{1+a}$ is practically evident. Note also that
$y_0(x)$ is even.

Thus, the function $y_{0}(x)$ is continuous on $[-\pi,\pi]$ and
satisfies the periodic conditions.
\par
Now let $u(x)=c+\int_0^x\phi(t)dt$, where $c=const$ and $\phi(x)\in
\mathcal{L}^2$. Then for $x>0$: $|\int_0^x\phi(t)dt|\leq
x^{1/2}\big(\int_0^x|\phi(t)|^2\big)^{1/2}$. The latter estimation
and (\ref{13}) yield $\lim\limits_{x\to 0}y(x)=c\cdot y_0(0)$. The
same function can be re-written as following
$u(x)=c_++\int_{\pi}^x\phi(t)dt$ or
$u(x)=c_-+\int_{-\pi}^x\phi(t)dt$. Then the estimations
$|\int_{\pi}^x\phi(t)dt|\leq (\pi-
x)^{1/2}\big(-\int_{\pi}^x|\phi(t)|^2\big)^{1/2}$, $x\in (o,\pi)$
and $|\int_{-\pi}^x\phi(t)dt|\leq (\pi+
x)^{1/2}\big(\int_{-\pi}^x|\phi(t)|^2\big)^{1/2}$, $x\in (-\pi,o)$
together with Representation (\ref{13}) yield  $\lim\limits_{x\to
\pi}y(x)=c_+\cdot y_0(\pi)$ and $\lim\limits_{x\to
-\pi}y(x)=c_-\cdot y_0(-\pi)$. Thus, $y(x)$ satisfies the periodic
condition if and only $u(x)$ satisfies. The latter yields the
equality
\begin{equation}\label{14}L=SM\, .\end{equation}
 Moreover, we shown that for
every $u(x)=c+\int_0^x\phi(t)dt$ with $\phi(x)\in \mathcal{L}^2$
there is absolutely continuos on $[-\pi, \pi ]$ function $y(x)$,
such that $(My)(x)=u(x)$, so $\mathfrak{R}(M)$ is dense in
$\mathcal{L}^2$ or, equivalently, $\ker(M^*)=\{ 0\}$.
\par
Note, that $M$ is boundedly invertible. Indeed, $M=D+iC$, where
$C\colon$ $\mathcal{L}^2\mapsto \mathcal{L}^2$,
$$(Cy)(x)\colon =i\Big\{ \frac{b}{2} \, \cos x\, y(x) -
b \, (\sin x \cdot y(x))^{\prime} \Big\}
$$
and $D\colon$ $\mathcal{L}^2\mapsto \mathcal{L}^2$,
$$(Dy)(x)\colon =\big(1 - (a+\frac{b}{2}) \, \cos x\big)y(x)
\,.
$$
\noindent It easy to check that $C$ is self-adjoint. Moreover, $D$
is positive, bounded and boundedly invertible, so the problem of
invertibility of $M$ is equivalent  to the problem of regularity of
non-real numbers for a self-adjoint operator (for a more detail
reasoning see, for instance, \cite{ChugStr}).
\par
Now let us prove that $L$ is closed. The operator $S$ restricted on
the subspace $\mathcal{L}_1\subset\mathcal{L}^2$ of functions
orthogonal to constants has a bounded inverse. Let us find
$M^{-1}(\mathcal{L}_1)$. If $(My)(x)\in\mathcal{L}_1$, then
$\int_{-pi}^{\pi}(My)(x)d\, x=0$, but $\int_{-\pi}^{\pi}(y(x)\sin
x)^{\prime}d\, x=0 $, so $y(x)\in M^{-1}(\mathcal{L}_1)$ if and only
if $y(x)\in\mathfrak{D}(M)$ and $\int_{-\pi}^{\pi}\big(1 - (a+b) \,
\cos x\big)y(x)d\, x=0$. Let $\mathcal{L}_2\colon =\Big\{\big(1 -
(a+b) \, \cos x\big)\Big\}^{\perp}$. Since for $y_0(x)$ we have
$2\pi=\int_{-\pi}^{\pi}(My_0)(x)d\, x=\int_{-\pi}^{\pi}y_0(x)\big(1
- (a+b) \, \cos x\big)d\, x$, $y_0(x)\not\in\mathcal{L}_2$ and
$\mathcal{L}^2=\mathcal{L}_2\dot{+}\{\mu\cdot
y_{0}(x)\}_{\mu\in\mathbb{C}}$.
Now let a sequence $\{y_k(x)\}$ be such that $y_k(x)\to y(x)$ and
$z_k(x)=(Ly_k)(x)\to z(x)$. Then $(My_k)(x)=v_k(x)+c_k$, where
$v_k(x)=((S|_{\mathcal{L}_1})^{-1}z)(x)$, $c_k=const$,
$k=1,2,\ldots$ . Since $(S|_{\mathcal{L}_1})^{-1}$ is bounded, the
sequence $\{v_k(x)\}$ has a limite $v(x)$. In turn, in virtue of
similar reasons the sequence $w_k(x)=(M^{-1}v_k)(x)$ also has a
limite. Simultaneously
$y_k(x)=(M^{-1}(v_k+C_k))(x)=w_k(x)+c_ky_0(x)$. Thus, the sequence
$\{c_k\}$ has a limite. The rest is straightforward.
\end{proof}
\begin{corollary} If the parameters $a$ and $b$ satisfy the inequality
 $2 a + b < 2$, then the set $\mathfrak{D}(L)$ is the
linear sub-manifold  $H$ of the Sobolev space $H^1 (-\pi, \pi)$:
$$ \mathfrak{D}(L) = H : { f \in H^1 (-\pi, \pi),\quad f(\pi) = f(-\pi,) \quad
\sin(x)f' \in H^1 (-\pi, \pi) }$$ and is a Hilbert space with the
norm defined as:
$$||f||^2 = || f||^2_{H^1} + || \sin(x)f'(x)||^2_{H^1}.$$
\end{corollary}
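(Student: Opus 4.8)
By Theorem~\ref{main}, $L=SM$ with $M$ boundedly invertible, so $\mathfrak D(L)=\{y:y\in\mathfrak D(M),\ My\in\mathfrak D(S)\}=M^{-1}\big(\mathfrak D(S)\big)$, where $\mathfrak D(S)=\{u\in H^1(-\pi,\pi):u(-\pi)=u(\pi)\}$. The plan is to prove the set inclusions $H\subseteq\mathfrak D(L)$ and $\mathfrak D(L)\subseteq H$ separately, and then to check that $(H,\|\cdot\|)$ is complete.

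The inclusion $H\subseteq\mathfrak D(L)$ is the quick half. If $f\in H$ then $(1-a\cos x)f\in H^1$ (a smooth multiple of an $H^1$ function) and $b\sin x\,f'\in H^1$ by the definition of $H$, so $Mf\in H^1$; moreover $f'\in\mathcal L^2$ forces the continuous function $\sin x\,f'$ to vanish at $\pm\pi$ (otherwise $f'$ would behave like $c/(\pi\mp x)\notin\mathcal L^2$), hence $(Mf)(\pm\pi)=(1+a)f(\pm\pi)$ and $Mf$ is periodic because $f$ is. Thus $Mf\in\mathfrak D(S)$ and $f\in\mathfrak D(SM)=\mathfrak D(L)$.

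The reverse inclusion $\mathfrak D(L)\subseteq H$ is the core of the argument. Take $y\in\mathfrak D(L)$ and put $u:=My\in\mathfrak D(S)$, so $u\in H^1$ is periodic; Theorem~\ref{main} already gives that $y$ is absolutely continuous on $[-\pi,\pi]$ with $y(-\pi)=y(\pi)$, so the whole matter reduces to showing $y\in H^1$, i.e. $y'\in\mathcal L^2$ — for then $\sin x\,y'=\frac1b\big(u-(1-a\cos x)y\big)$ is a sum of $H^1$ functions, hence in $H^1$, and $y\in H$. Away from $0,\pm\pi$ the identity $b\sin x\,y'=u-(1-a\cos x)y\in\mathcal L^2$ gives $y'\in\mathcal L^2_{\mathrm{loc}}$ at once, so only the three zeros of $\sin x$ need attention. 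Near each of them I use that $p:=b\sin x\,y'=u-(1-a\cos x)y$ solves the first-order linear equation $p'+\frac{1-a\cos x}{b\sin x}\,p=u'-a\sin x\,y$ (which is just $Ly=u'$ rewritten) with right side in $\mathcal L^2$; integrating it with the factor $\mu(x)=(\tan\frac x2)^{1/b}(\sin x)^{-a/b}$ (the one underlying formula~(\ref{13})), whose local behaviour is $\mu(x)\asymp|x|^{(1-a)/b}$ near $0$ and $\mu(x)\asymp|\pi\mp x|^{-(1+a)/b}$ near $\pm\pi$, and dividing by $b\sin x$, one exhibits $y'$ near each singular point as a bounded term plus a weighted Hardy transform (of ordinary type near $0$, of conjugate type near $\pm\pi$) of $u'\in\mathcal L^2$. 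The assumption $2a+b<2$ is precisely what makes the exponents admissible — it yields $(1-a)/b>0$ and $(1+a)/b>1/2$ — so the Muckenhoupt condition for the relevant Hardy operator is met and it maps $\mathcal L^2$ into $\mathcal L^2$; hence $y'\in\mathcal L^2$ near $0,\pm\pi$, so $y'\in\mathcal L^2(-\pi,\pi)$ and $y\in H$. Together with the previous paragraph this gives $\mathfrak D(L)=H$.

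Finally, $(H,\|\cdot\|)$ is a Hilbert space. The expression $\|f\|^2=\|f\|_{H^1}^2+\|\sin x\,f'\|_{H^1}^2$ is the norm of an inner product, and completeness is immediate: a Cauchy sequence $\{f_n\}$ in $H$ is Cauchy in $H^1$, so $f_n\to f$ in $H^1$; it is also Cauchy after multiplication by $\sin x$, so $\sin x\,f_n'\to g$ in $H^1$; since $f_n'\to f'$ in $\mathcal L^2$ forces $\sin x\,f_n'\to\sin x\,f'$ in $\mathcal L^2$, we get $g=\sin x\,f'\in H^1$, while the boundary condition survives the limit through $H^1(-\pi,\pi)\hookrightarrow C[-\pi,\pi]$; hence $f\in H$ and $f_n\to f$ in $H$. (Equivalently $\|\cdot\|$ is equivalent to the graph norm of the closed operator $L$: $\|Ly\|_{\mathcal L^2}\le\|My\|_{H^1}\le C\|y\|$ is clear, and the reverse follows from the open mapping theorem applied to the bounded bijection $M:(H,\|\cdot\|)\to(\mathfrak D(S),\|\cdot\|_{H^1})$.) The step I expect to be the real obstacle is the $\mathcal L^2$-estimate on $y'$ near $0$ and $\pm\pi$: the leading singular contributions to $y'$ cancel there and a crude Cauchy--Schwarz bound misses $\mathcal L^2$ by a logarithmic factor, so the sharp weighted Hardy inequality — available precisely because $2a+b<2$ — is what closes the argument.
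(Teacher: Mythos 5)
Your proof is correct, but note that the paper itself offers no argument for this corollary beyond the single sentence deferring to the corresponding proposition of \cite{ChugStr} (the case $a=0$), so the only fair comparison is with the strategy implicit in the proof of Theorem \ref{main}. Your route is the natural realization of that strategy: both inclusions are read off from $L=SM$ and the integral representation (\ref{13}) of $M^{-1}$, and you correctly isolate the one delicate point, namely $y'\in\mathcal L^2$ near the zeros of $\sin x$, where plain Cauchy--Schwarz on the Duhamel integral only yields $|y'(x)|\lesssim |x|^{-1/2}\cdot o(1)$ and the weighted Hardy inequality with exponents $(1-a)/b$ at $0$ and $(1+a)/b$ at $\pm\pi$ (admissible because $2a+b<2$ forces $a<1$ and $b<2+2a$) is what closes the gap. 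One imprecision should be repaired: near $x=0$ the homogeneous contribution to $y'=p/(b\sin x)$ is not a ``bounded term'' --- it behaves like $1/(\sin x\,\mu(x))\asymp|x|^{-1-(1-a)/b}$, which is not even in $\mathcal L^2$ --- so you must show its coefficient vanishes; this follows from $\mu(0^{+})=0$ together with the continuity of $p=u-(1-a\cos x)y$ at $0$ (equivalently, from (\ref{13})). Near $\pm\pi$ the homogeneous contribution $\asymp(\pi\mp x)^{(1+a)/b-1}$ genuinely survives but lies in $\mathcal L^2$ precisely because $(1+a)/b>1/2$, so your conclusion stands. The forward inclusion $H\subseteq\mathfrak D(L)$ and the completeness of $(H,\|\cdot\|)$ are handled correctly; altogether your write-up supplies the proof that the paper only cites.
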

The reasoning of this corollary is the same as the reasoning of the
corresponding proposition in \cite{ChugStr}.
\par
For the next step we need the following simple remark.
\begin{lemma} Let $\mathcal{H}$ be a Hilbert space, $x_1\,
,x_2\in\mathcal{H}$, $(x_1,x_2)\not=0$. Let $\mathcal{H}_1\colon =
\{x_1\}^{\perp}$, $\mathcal{H}_2\colon =\{x_2\}^{\perp}$. Let $P_1$
and $P_2$ be ortho-projections onto the subspaces $\mathcal{H}_1$
and $\mathcal{H}_2$ respectively. Then $P_2|_{\mathcal{H}_1}$ is
one-to-one mapping onto $\mathcal{H}_2$.\label{trivial}
\end{lemma}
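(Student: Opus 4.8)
The plan is to treat injectivity and surjectivity of the map $P_2|_{\mathcal{H}_1}\colon\mathcal{H}_1\to\mathcal{H}_2$ separately, each by a one-line computation that exploits the fact that $\ker P_i=\mathcal{H}_i^{\perp}$ is the one-dimensional span of $x_i$. Throughout I will use that $P_2 z=z$ for $z\in\mathcal{H}_2$, that $P_2 x_2=0$, and that $(x_2,x_1)=\overline{(x_1,x_2)}\neq 0$, so that division by $(x_2,x_1)$ is legitimate.

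For injectivity I would argue as follows: suppose $y\in\mathcal{H}_1$ and $P_2 y=0$. Then $y\in\ker P_2=\{\lambda x_2:\lambda\in\mathbb{C}\}$, so $y=\lambda x_2$ for some scalar $\lambda$. Since also $y\in\mathcal{H}_1=\{x_1\}^{\perp}$, pairing with $x_1$ gives $0=(y,x_1)=\lambda(x_2,x_1)$, and because $(x_2,x_1)\neq 0$ this forces $\lambda=0$, hence $y=0$. Thus $P_2|_{\mathcal{H}_1}$ is one-to-one.

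For surjectivity, given $z\in\mathcal{H}_2$ I would exhibit its preimage explicitly: set
$$
y\colon = z-\frac{(z,x_1)}{(x_2,x_1)}\,x_2 .
$$
Applying $P_2$ and using $P_2 z=z$, $P_2 x_2=0$ gives $P_2 y=z$; and pairing $y$ with $x_1$ gives $(y,x_1)=(z,x_1)-\frac{(z,x_1)}{(x_2,x_1)}(x_2,x_1)=0$, so $y\in\mathcal{H}_1$. Hence every $z\in\mathcal{H}_2$ lies in the range of $P_2|_{\mathcal{H}_1}$, which completes the proof.

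There is essentially no serious obstacle here: the statement is a routine fact about orthoprojections onto hyperplanes, and the only point requiring a little care is the bookkeeping with conjugate-linearity of the inner product in its second argument when solving for the scalar coefficient, together with the observation that the non-orthogonality hypothesis $(x_1,x_2)\neq 0$ is exactly what makes both the coefficient well-defined and the map injective.
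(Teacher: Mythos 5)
Your proof is correct. It takes a somewhat different route from the paper's: the authors normalize $\|x_1\|=\|x_2\|=1$, set $\alpha=(x_1,x_2)>0$ and $\mathcal{H}_3=\mathcal{H}_1\cap\mathcal{H}_2$, and then decompose $\mathcal{H}_1=\mathbb{C}(x_2-\alpha x_1)\oplus\mathcal{H}_3$ and $\mathcal{H}_2=\mathbb{C}(x_1-\alpha x_2)\oplus\mathcal{H}_3$, observing that $P_2$ acts as the identity on the common part $\mathcal{H}_3$ and carries the one-dimensional complement of $\mathcal{H}_1$ onto that of $\mathcal{H}_2$ by a nonzero scalar multiple; bijectivity is then read off from this block picture. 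You instead prove injectivity and surjectivity separately: injectivity from $\ker P_2=\mathcal{H}_2^{\perp}=\mathbb{C}x_2$ together with $(x_2,x_1)\neq 0$, and surjectivity by exhibiting the explicit preimage $y=z-\frac{(z,x_1)}{(x_2,x_1)}x_2$. Both arguments are equally elementary and complete; yours has the small added benefit that the preimage formula is exactly the inverse map $(P_2|_{\mathcal{H}_1})^{-1}$ written out, which makes its boundedness manifest (something that is used implicitly later when the authors invert $P_1|_{\mathcal{L}_2}$ in Theorem 2), while the paper's decomposition makes the geometric structure of the two hyperplanes more visible.
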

\begin{proof} Let $\mathcal{H}_3\colon =\mathcal{H}_1\cap
\mathcal{H}_2$. With no loss of generality we can assume that
$\|x_1\|=\|x_2\|=1$, $(x_1,x_2)=\alpha >0$. Then $\mathcal{H}_1 =
\{\mu\cdot (x_2-\alpha\cdot x_1)\}_{\mu\in \mathbb{C}}\oplus
\mathcal{H}_3$, $\mathcal{H}_2 = \{\mu\cdot (x_1-\alpha\cdot
x_2)\}_{\mu\in \mathbb{C}}\oplus \mathcal{H}_3$. Since
$P_2|_{\mathcal{H}_3}=I_{\mathcal{H}_3}$ and $P_2(x_2-\alpha\cdot
x_1)=\alpha\cdot (x_1-\alpha\cdot x_2)$, the rest is evident.
\end{proof}
\begin{theorem}\label{main1}If the parameters $a$ and $b$ satisfy the inequality
 $2 a + b < 2$, then  the resolvent of $L$ is compact of the Hilbert-Schmidt type.\end{theorem}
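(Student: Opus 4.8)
The plan is to extract the Hilbert--Schmidt property from the factorization $L=SM$ of Theorem~\ref{main} together with the description of $\mathfrak D(L)$ in the Corollary. Two facts do the work. First, by the Corollary the domain $\mathfrak D(L)$, with its Hilbert norm $\|f\|^{2}=\|f\|_{H^{1}}^{2}+\|\sin x\cdot f'\|_{H^{1}}^{2}$, is continuously contained in $H^{1}(-\pi,\pi)$ (indeed $\|\cdot\|_{H^{1}}\le\|\cdot\|$). Second, the inclusion $\iota\colon H^{1}(-\pi,\pi)\hookrightarrow\mathcal L^{2}$ is Hilbert--Schmidt: the eigenfunctions $\{f_{n}\}$ of the Neumann Laplacian on $(-\pi,\pi)$, normalised in $H^{1}$, form an orthonormal basis of $H^{1}(-\pi,\pi)$ which is orthogonal in $\mathcal L^{2}$ with $\|f_{n}\|_{\mathcal L^{2}}^{2}=(1+\lambda_{n})^{-1}$ and $\lambda_{n}$ of order $n^{2}$, so $\|\iota\|_{HS}^{2}=\sum_{n}(1+\lambda_{n})^{-1}<\infty$.

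Assume for a moment that $\rho(L)\neq\varnothing$, and fix $\lambda_{0}\in\rho(L)$. Since $L$ is closed, $(L-\lambda_{0})^{-1}$ maps $\mathcal L^{2}$ boundedly into $\mathfrak D(L)$, hence (by the first fact) boundedly into $H^{1}(-\pi,\pi)$; composing with $\iota$ shows that $(L-\lambda_{0})^{-1}\colon\mathcal L^{2}\to\mathcal L^{2}$ is Hilbert--Schmidt. For any other $\lambda\in\rho(L)$ the resolvent identity $(L-\lambda)^{-1}=(L-\lambda_{0})^{-1}+(\lambda-\lambda_{0})(L-\lambda)^{-1}(L-\lambda_{0})^{-1}$ then shows that $(L-\lambda)^{-1}$ is Hilbert--Schmidt as well. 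In particular the resolvent is compact, so $\sigma(L)$ is discrete with infinity as the only possible accumulation point.

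It remains to exhibit one point of $\rho(L)$, which is the main obstacle; here Lemma~\ref{trivial} is the tool. Recall from the proof of Theorem~\ref{main} that $M$ carries $\mathcal L_{2}\cap\mathfrak D(M)$ onto $\mathcal L_{1}$ and $S$ carries $\mathcal L_{1}\cap\mathfrak D(S)$ onto $\mathcal L_{1}$, so $L=SM$ carries $\mathcal L_{2}\cap\mathfrak D(L)$ bijectively onto $\mathcal L_{1}$; moreover $\ker L=\mathbb C y_{0}$, with $y_{0}\notin\mathcal L_{1}$ (since $\int_{-\pi}^{\pi}y_{0}>0$, the integrand in the representation of $y_{0}$ being strictly positive on $(-\pi,0)\cup(0,\pi)$) and $y_{0}\notin\mathcal L_{2}$ (since $\langle y_{0},\,1-(a+b)\cos x\rangle=2\pi\neq0$). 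The projection $E_{2}$ of $\mathcal L^{2}$ onto $\mathcal L_{2}$ along $\mathbb C y_{0}$ is therefore well defined and, as in Lemma~\ref{trivial} (taken with $x_{1}=\mathbf 1$, $x_{2}=1-(a+b)\cos x$, where $(x_{1},x_{2})=2\pi\neq0$), it restricts to a bounded bijection $\mathcal L_{1}\to\mathcal L_{2}$; consequently $T:=E_{2}\,L|_{\mathcal L_{2}\cap\mathfrak D(L)}$ is a bijection of a dense domain in $\mathcal L_{2}$ onto $\mathcal L_{2}$, and its inverse is bounded with range in $\mathcal L_{2}\cap\mathfrak D(L)\subset H^{1}$, hence Hilbert--Schmidt. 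Thus $T$ has compact resolvent, $\sigma(T)$ is a discrete (possibly empty) set, and $\rho(T)\neq\varnothing$. Finally, writing $y=y_{2}+c\,y_{0}$ with respect to $\mathcal L^{2}=\mathcal L_{2}\dot{+}\,\mathbb C y_{0}$ and using $Ly_{0}=0$, the equation $(L-\lambda)y=g$ is equivalent to $(T-\lambda)y_{2}=E_{2}g$ together with one scalar relation determining $c$; for $\lambda\in\rho(T)$ with $\lambda\neq0$ both parts are uniquely solvable, so $L-\lambda$ is a bijection and $\rho(L)\supseteq\rho(T)\setminus\{0\}\neq\varnothing$.

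The genuinely non-routine content is thus the previous paragraph: organising the factorization, via Lemma~\ref{trivial}, into a bona fide operator on a single Hilbert space that has compact resolvent, and verifying that its inverse lands in $H^{1}$. Once $\rho(L)\neq\varnothing$ is in hand, the Hilbert--Schmidt conclusion rests only on the embedding $H^{1}(-\pi,\pi)\hookrightarrow\mathcal L^{2}$, which is the single new analytic ingredient beyond Theorem~\ref{main}. (Alternatively, a regular point can be produced directly: from $L-\lambda=(S-\lambda M^{-1})M$, with $M$ boundedly invertible and $S$ skew-adjoint with compact resolvent, $L-\lambda$ is Fredholm of index $0$ for all $\lambda$, and a short computation on $\mathbf 1^{\perp}$ shows that injectivity of $L-\lambda$ for small $\lambda\neq0$ is controlled by a scalar holomorphic function whose value at $\lambda=0$ is $\langle M^{-1}\mathbf 1,\mathbf 1\rangle=\int_{-\pi}^{\pi}y_{0}>0$, hence nonzero for small $\lambda\neq0$.)
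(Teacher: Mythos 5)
Your argument is correct in substance, but it reaches the Hilbert--Schmidt conclusion by a genuinely different route than the paper. The paper works with the orthogonal decomposition $\mathcal{L}^2=\mathcal{L}_0\oplus\mathcal{L}_1$ ($\mathcal{L}_0$ the constants), observes that $L$ is block lower-triangular with respect to it, and inverts the corner block explicitly: writing $\mathcal{L}_2=M(\mathfrak{D}(M)\cap\mathcal{L}_1)=\{z_0\}^{\perp}$ with $z_0=(M^*)^{-1}1$, it applies Lemma \ref{trivial} to the pair $\{1,z_0\}$ to get $L_{11}^{-1}=(M^{-1}|_{\mathcal{L}_2})(P_1|_{\mathcal{L}_1})^{-1}(S|_{\mathcal{L}_1})^{-1}$, and the Hilbert--Schmidt property is inherited from the explicit integral operator $(S|_{\mathcal{L}_1})^{-1}$; the full resolvent then comes from the block-triangular formula, so no separate existence argument for a regular point is needed. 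You instead derive the Hilbert--Schmidt property from the domain description in the Corollary together with the fact that $H^1(-\pi,\pi)\hookrightarrow\mathcal{L}^2$ is Hilbert--Schmidt, and then must separately produce a point of $\rho(L)$, which you do by building the auxiliary operator $T=E_2L|_{\mathcal{L}_2\cap\mathfrak{D}(L)}$ with compact inverse. Your route buys generality (it would give the same conclusion for any perturbation with the same domain), at the cost of leaning on the Corollary --- whose proof the paper only cites from \cite{ChugStr} --- and of the extra work to show $\rho(L)\neq\emptyset$, which the paper gets for free from its explicit resolvent formula.

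Two small points you should tighten. First, the step ``$(L-\lambda_0)^{-1}$ maps $\mathcal{L}^2$ boundedly into $\mathfrak{D}(L)$, hence boundedly into $H^1$'' silently uses the equivalence of the graph norm of $L$ with the norm $\|f\|_{H^1}^2+\|\sin x\,f'\|_{H^1}^2$; this does follow (the latter norm dominates the graph norm by a direct estimate on $Lf$, and both are complete, so the open mapping theorem gives equivalence), but it deserves a sentence. Second, your $E_2$ is the \emph{oblique} projection onto $\mathcal{L}_2$ along $\mathbb{C}y_0$, whereas Lemma \ref{trivial} as stated concerns orthoprojections; the bijectivity of $E_2|_{\mathcal{L}_1}\colon\mathcal{L}_1\to\mathcal{L}_2$ should instead be justified directly from $\mathcal{L}^2=\mathcal{L}_2\dot{+}\{\mu y_0\}$ together with $(y_0,1)\neq 0$, which is immediate but is not literally the lemma.
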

\begin{proof} Let $\mathcal{L}_0\subset\mathcal{L}^2$ be the subspace of constants,
$\mathcal{L}_1\colon =\mathcal{L}_0^{\perp}$. Since
$\mathfrak{R}(L)=\mathcal{L}_1$, the operator $L$ has the following
matrix representation $$ L= \left[ \begin{array}{cc} 0 & 0 \\
L_{10} & L_{11} \end{array} \right]$$ with respect to the
decomposition $\mathcal{L}^2=\mathcal{L}_0\oplus\mathcal{L}_1 $. The
operator $L_{10}$: $1\to a\cdot \sin x$ is bounded, so we need to
analyze the properties of $L_{11}$. From Theorem 1 we have
$L_{11}=SM|_{\mathcal{L}_1}$. Let
$\mathcal{L}_2=M(\mathfrak{D}(M)\cap\mathcal{L}_1)$,
$y_0(x)=(M^{-1}1)(x)$, $z_0(x)=((M^*)^{-1}1)(x)$. Then for
$y(x)\in\mathfrak{D}(M)\cap\mathcal{L}_1$ we have
$0=(y,1)=(y,M^*z_0)=(My,z_0)$, so $\mathcal{L}_2=\{ z_0\}^{\perp}$.
From the other hand, $(1,z_0)=(My_0,z_0)=(y_0,M^*z_0)=(y_0,1)$.
Since (see the proof of Theorem \ref{main}) $(y_0,1)\not= 0$, the
pair $\{ 1, z_0\}$ is under the conditions of Lemma \ref{trivial}.
Let $P_1$ be the ortho-projection onto $\mathcal{L}_1$. Then
$L_{11}=S\cdot (P_1|_{\mathcal{L}_2})\cdot (M|_{\mathcal{L}_1})$, so
$L_{11}^{-1}=
(M^{-1}|_{\mathcal{L}_2})\cdot(P_1|_{\mathcal{L}_1})^{-1}\cdot
(S|_{\mathcal{L}_1})^{-1}$. Thus, $L_{11}^{-1} $ is an operator of
the Hilbert-Schmidt type. Since
$$R_{\lambda}(L)= \left[ \begin{array}{cc} R_{\lambda}(0) & 0
\\\frac{1}{\lambda}R_{\lambda}(L_{11})
L_{10} & R_{\lambda}(L_{11}) \end{array} \right]\, , $$ the rest is
straightforward.
\end{proof}

{\bf Acknowledgement.}  M.C. is supported by the NSERC Postdoctoral
Fellowship.

\begin{tabular}{ll}

     Department  & \qquad Departamento de  Matem\'aticas \\
     of Mathematics &\qquad Puras y Aplicadas\\
     University of Toronto & \qquad  Universidad Sim\'on Bol\'{\i}var\\
     Toronto, Ontario M5S 2E4 & \qquad  Caracas 1080\\
     Canada & \qquad  Venezuela\\
     e-mail: chugunom@math.toronto.edu & \qquad    e-mail: str@usb.ve\\

\end{tabular}

\end{document}